\newtheorem{theorem}{Theorem}
\newtheorem{definition}{Definition}
\newtheorem{example}{Example}
\newtheorem{lemma}{Lemma}
\newtheorem{corollary}{Corollary}
\newtheorem{remark}{Remark}
\newtheorem{proposition}{Proposition}
\newcommand{\be}{\begin{enumerate}}
\newcommand{\ee}{\end{enumerate}}
\newcommand{\beq}{\begin{equation}}
\newcommand{\eeq}{\end{equation}}
\newcommand{\ov}[1]{\mbox{${\overline{#1}}$}}
\def\bF{{\mathbb F}}
\def\bZ{{\mathbb{Z}}}
\newcommand{\RT}{\rT} 
\DeclareMathOperator{\rT}{T}
\newcommand{\tr}{{\rm tr}} 
\title{Drinfeld-Manin solutions of the Yang-Baxter equation coming from cube complexes}
\author{Alina Vdovina}
\begin{document}

\maketitle

\begin{abstract}
The most common geometric interpretation of the Yang-Baxter equation is by braids, knots and relevant Reidemeister moves. So far,  cubes were used for connections with
the third Reidemeister move only. We will show that there are higher-dimensional cube complexes solving the $D$-state Yang-Baxter equation for arbitrarily large $D$.
More precisely, we introduce explicit constructions of cube complexes covered by products of $n$ trees and show that these cube complexes lead to new solutions of  the Yang-Baxter equations.

\end{abstract}

\section{Introduction}

The Yang-Baxter equations appear in many fields of mathematics and theoretical physics:
they are defined in many ways, possess many interpretations and occur in many contexts.

Often in the literature, the Yang-Baxter equation is called ``triangular'' ---  see, for example \cite{frenkel} --- but we will show that a very natural framework lies in those $n$-dimensional cube complexes 
whose universal cover is a Cartesian product of $n$ trees.
Our main result is that each $n$-dimensional cube complex whose universal cover is a Cartesian product of $n$ trees 
induces a Drinfeld-Manin solution of the Yang-Baxter equation.
The following definition distills the result of analysing over 30 years' worth of papers.

\begin{definition}
Let $X$ be a (non-empty) set, and $R \colon X^2 \to X^2$ be a bijection given by
$R(x, y) = (u, v)$.
We call $R$ a {\em Drinfeld-Manin solution} of the Yang-Baxter equation if
$$R^{12}R^{23}R^{12} = R^{23}R^{12}R^{23}$$
on $X^3$, where $R^{ij}$ means $R$ acting on $i$-th and $j$-th component of $X^3$.
\end{definition}

This definition is quite close to what is called a {\em set-theoretical solution} or a {\em Yang-Baxter map} in the literature; see, \cite{etingof, veselov, drinfeld}. 
See also \cite{agata} for the most recent account of the subject.

\begin{theorem}
Let $G$ be a group acting simply transitively on a product of $n \geq 3$ trees $\Delta$, and let $P$ be a cube complex obtained as the quotient of $\Delta$ by the action of $G$.
Then $G$ induces a Drinfeld-Manin solution $R$ of the Yang-Baxter equation in the following way:
edges of $P$ inherit labelings and orientations from the action; 
the set $X$ is taken to be the set of labels on the edges of the cubes; 
the bijection $R$ is induced by the 2-cells of $P$ --- more precisely, if $x_ix_jx_kx_l$ is a label 
of a square, then $R(x_i,x_j)=(x_l^{-1},x_k^{-1})$ whereas 
if $x_i$ and $x_j$ do not appear next to each other in a 2-cell of $C$ then $R(x_i,x_j)=(x_i,x_j)$.
\end{theorem}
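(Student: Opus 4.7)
My plan is to interpret both sides of the Yang-Baxter equation as two rewritings of a single group element $g = x_1 x_2 x_3 \in G$, obtained by substituting via the boundary relations of squares in $P$, and to use uniqueness of 3-cubes in the universal cover $\Delta$ to show that the two rewritings produce identical triples. First I would set up the color structure on $X$: since $G$ preserves the product structure of $\Delta = T_1 \times \cdots \times T_n$ (a standard feature of simply transitive actions on a product of trees), the label set partitions as $X = X_1 \sqcup \cdots \sqcup X_n$ according to which tree factor an edge lies in. Two structural facts drive everything: (i) every pair of edges of distinct colors emanating from a common vertex spans a unique 2-cell, and (ii) every triple of edges of pairwise distinct colors emanating from a common vertex spans a unique 3-cube. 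Both follow from the link of each vertex of $\Delta$ being a join of the links in the tree factors, and simple transitivity lets me transport any configuration to a fixed basepoint.

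Well-definedness and bijectivity of $R$ are then immediate: for a distinct-color pair $(x,y)$, the unique square with boundary $xyx_kx_l$ yields $R(x,y) := (x_l^{-1}, x_k^{-1})$; for a same-color pair $R$ is the identity; $R$ swaps the colors of its arguments; and its inverse is given by reading the same square in the opposite rotational sense. For the Yang-Baxter equation itself, I would observe that each application of $R^{ij}$ replaces a two-letter subword by an equivalent one using a square relation, so both $R^{12}R^{23}R^{12}(x_1,x_2,x_3)$ and $R^{23}R^{12}R^{23}(x_1,x_2,x_3)$ are three-letter words whose product in $G$ still equals $g$, and both have color sequence equal to the reversal $(c_3,c_2,c_1)$ of the original color sequence $(c_1,c_2,c_3)$. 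It therefore suffices to show that $g$ admits at most one factorisation as a three-letter word with a prescribed color sequence. When $c_1,c_2,c_3$ are pairwise distinct this is exactly fact (ii): the word traces a monotone path from the basepoint $v$ to $vg$ inside the unique 3-cube at $v$ spanned by the three relevant colors, and within that 3-cube the monotone path realising a given color order is unique. The degenerate cases where two of the three colors coincide reduce to fact (i) via a short direct check, since one of the three $R$-applications on each side then becomes the identity.

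The main obstacle I expect is the careful bookkeeping of orientations and inverses encoded in the formula $R(x_i,x_j) = (x_l^{-1}, x_k^{-1})$: I need to verify that the inverse-labels produced by one application of $R$ are legitimate arguments for the next, that the three substitutions on each side really compose to a color reversal rather than some other permutation, and that the convention for reading the boundary of a square is consistent under cyclic rotation and edge inversion. Once that convention is fixed, the substantive content of the proof reduces cleanly to the uniqueness of monotone paths in 3-cubes of the product-of-trees cover, which is the hexagon identity expected from the combinatorics of the symmetric group $S_3$ realised geometrically by the boundary of a 3-cube.
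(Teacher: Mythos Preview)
Your proposal is correct and follows essentially the same approach as the paper: reduce to the $3$-cube setting, split into cases according to whether the three colors are pairwise distinct, two equal, or all equal, and in the main case identify the two sides of the Yang--Baxter equation with the two monotone paths across a single $3$-cube in the cover. Your treatment is more detailed---you spell out well-definedness, bijectivity, and the color-reversal bookkeeping that the paper leaves implicit---but the geometric core (the hexagon identity on the boundary of a $3$-cube) is identical to the paper's argument.
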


The group $G$ is a natural invariant of the Drinfeld-Manin solution; we shall  call it the {\it cube complex group}.
This distinguishes results of the present paper from earlier work.

\begin{corollary}
Let $P$ be the flip map  $P \colon  X^2 \to X^2$, such that $P(x,y)=(y,x)$. 
Then  the map $Q=P \circ R$ is a solution to the quantum Yang-Baxter equation $$R^{12}R^{13}R^{23} = R^{23}R^{13}R^{12}, $$ where $R$
is the map from the Theorem~1. 

\end{corollary}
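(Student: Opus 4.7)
The plan is to reduce the quantum Yang--Baxter equation for $Q$ to the braid equation for $R$ established in Theorem~1. The starting observation is that, since $Q = P \circ R$ and $P$ is the coordinate flip on $X^2$, one has $Q^{ij} = P_{ij} R^{ij}$, where $P_{ij}$ denotes the transposition of the $i$-th and $j$-th coordinates of $X^3$.

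The key technical ingredient is a short list of conjugation identities, each verified by direct evaluation on a generic triple $(a,b,c) \in X^3$:
\[ P_{12} R^{13} P_{12} = R^{23}, \quad P_{12} R^{23} P_{12} = R^{13}, \quad P_{23} R^{12} P_{23} = R^{13}, \quad P_{23} R^{13} P_{23} = R^{12}. \]
(In general, conjugating $R^{jk}$ by a transposition $P_{i,i+1}$ that shares exactly one index with $\{j,k\}$ simply relabels the unshared index via the transposition.) Combined with the $S_3$-relations $P_{13} = P_{12}P_{23}P_{12} = P_{23}P_{12}P_{23}$, these suffice to transport every $P$ past every $R$.

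The main calculation is then symbolic. I would expand the middle factor $P_{13}$ on each side of $Q^{12}Q^{13}Q^{23} = Q^{23}Q^{13}Q^{12}$, using $P_{13} = P_{23}P_{12}P_{23}$ for the left-hand side and $P_{13} = P_{12}P_{23}P_{12}$ for the right-hand side, and then apply the conjugation identities repeatedly to push all transpositions to the far left. A short chain of substitutions reduces both sides to
\[ Q^{12} Q^{13} Q^{23} = P_{13} \cdot R^{23} R^{12} R^{23}, \qquad Q^{23} Q^{13} Q^{12} = P_{13} \cdot R^{12} R^{23} R^{12}, \]
and the braid equation $R^{12}R^{23}R^{12} = R^{23}R^{12}R^{23}$ from Theorem~1 immediately equates the two right-hand sides.

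The most delicate step is choosing the expansion of $P_{13}$ on each side: a wrong choice would produce a coinciding-index product such as $P_{12}R^{12}$ at some intermediate stage, forcing the appearance of an ``opposite'' operator $PRP$ that does not simplify further within the calculus of $P$'s and $R$'s. Taking the two opposite expansions on the two sides naturally avoids this, and the computation then goes through cleanly.
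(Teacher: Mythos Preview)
Your argument is correct: the conjugation identities you list hold by direct evaluation, and the two chains of substitutions do reduce $Q^{12}Q^{13}Q^{23}$ and $Q^{23}Q^{13}Q^{12}$ to $P_{13}\,R^{23}R^{12}R^{23}$ and $P_{13}\,R^{12}R^{23}R^{12}$ respectively, so Theorem~1 finishes the job.

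The paper, however, does not give its own proof of this corollary at all --- it simply cites \cite{etingof} for the well-known equivalence between the braid form of the Yang--Baxter equation and the quantum Yang--Baxter equation under the substitution $Q = P\circ R$. What you have written is precisely the standard elementary verification underlying that citation. So your route is not really \emph{different} from the paper's in spirit; it is a fully spelled-out, self-contained version of the argument the paper outsources to the literature. The gain is that your write-up makes the corollary independent of the external reference; the cost is only a few lines of computation.
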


The connection between  the Yang-Baxter equation and the quantum Yang-Baxter equation was established in \cite{etingof}.

\begin{definition}
 Two solutions $(X, r)$ and $(X^{\prime}, r^{\prime})$  are
isomorphic if there exists a bijective map $\nu  \colon X\to X^{\prime}$
such that $r^{\prime}(\nu(x), \nu(y)) = (\nu(\sigma x(y)), \nu (\gamma y(x)))$,
where $r(x, y) = (\sigma x(y), \gamma y(x))$, for $x, y \in X$.
\end{definition}

\begin{proposition}
The Drinfeld-Manin solutions associated to non-isometric cube complexes are not isomorphic.
\end{proposition}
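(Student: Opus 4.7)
\medskip
\textbf{Proof plan.} I would argue the contrapositive. Suppose the Drinfeld-Manin solutions $(X, R)$ and $(X', R')$ attached to cube complexes $P$ and $P'$ are isomorphic via $\nu : X \to X'$ as in Definition~2; my goal is to promote $\nu$ to a cubical isometry $P \to P'$. The guiding principle is that in our setting the combinatorial structure of $P$ is already encoded in the labeled data $(X, R)$, so a solution isomorphism should extend automatically.

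The first step is to reconstruct $P$ from $(X, R)$. The $1$-cells of $P$ inherit labels in $X$ (with formal inverses recording orientation reversal), and the splitting $X = X_1 \sqcup \cdots \sqcup X_n$ according to which tree factor an edge comes from can be read off from $R$ itself: by Theorem~1, two labels $x, y$ lie in the same factor if and only if $R(x, y) = (x, y)$, since edges from distinct factors always appear as adjacent sides of some square of $P$. The $2$-cells are then exactly the $4$-tuples $x_i x_j x_k x_l$ for which $R(x_i, x_j) = (x_l^{-1}, x_k^{-1})$.

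The second step is to transport the $2$-skeleton along $\nu$ and then extend to higher dimensions. By Definition~2 the formula $r'(\nu(x), \nu(y)) = (\nu(\sigma_x(y)), \nu(\gamma_y(x)))$ means that $\nu$ identifies the square-defining equations of $R$ with those of $R'$ and respects the tree-type partition, hence induces a bijection between squares of $P$ and squares of $P'$. Because the universal cover of $P$ is a product of $n$ trees, its full cube complex structure is determined by the $2$-skeleton: an $n$-cube is glued in exactly when all of its square $2$-faces are present, and the Yang-Baxter equation --- preserved by $\nu$ --- is the consistency condition making these gluings coherent. Consequently $\nu$ extends to a cellular bijection $P \to P'$, which is automatically an isometry with respect to the standard cubical metric.

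The main obstacle is the interplay between $\nu$ and the involution $x \mapsto x^{-1}$ on $X$. Squares are encoded in $R$ in the mixed form $(x_i, x_j) \mapsto (x_l^{-1}, x_k^{-1})$, so for the transported equation to describe an actual square of $P'$ one needs $\nu(x^{-1}) = \nu(x)^{-1}$. I expect this to follow from the fact that the involution is intrinsic to the combinatorial data --- it records orientation reversal of edges and is recoverable from the group $G$ acting on $\Delta$ --- so that any honest isomorphism of solutions coming from cube complexes automatically respects it; making this compatibility precise is, I think, the step requiring the most care, after which the rest of the argument is essentially bookkeeping.
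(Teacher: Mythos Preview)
The paper does not actually supply a proof of this proposition: it is asserted in the introduction and later invoked at the end of Section~5.4 to distinguish $R_1$ from $R_2$, but no argument appears anywhere in between. So there is no ``paper proof'' to benchmark your outline against.

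Your strategy --- reconstruct the one-vertex cube complex from $(X,R)$ and show that a solution isomorphism $\nu$ upgrades to a cellular isomorphism --- is the natural one, and it is consonant with the paper's remarks in Section~3 that the structure-semigroup relations ``fall into four equivalence classes corresponding to geometric squares of the complex $P$'' and that ``the complex cube group is a quotient of the structure group.'' Your recovery of the factor partition from the fixed-pair locus of $R$ is correct: if $x\in A_i$ and $y\in A_j$ with $i\neq j$, the complete-bipartite link forces $x,y$ to be adjacent in some square, and then $R(x,y)\in A_j\times A_i$ cannot equal $(x,y)$ since the generating sets are disjoint.

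The obstacle you flag is genuine and is exactly the step the paper omits. Definition~2 imposes no compatibility between $\nu$ and the edge-reversal involutions $\delta,\delta'$, yet the reconstruction of squares from $R$ uses $\delta$ essentially (through the $x_k^{-1},x_l^{-1}$ in Theorem~1). One way to close the gap is to show $\delta$ is intrinsic to $R$: a single geometric square yields the four relations $R(a,b)=(b',a')$, $R(a',b^{-1})=(b'^{-1},a)$, $R(a^{-1},b')=(b,a'^{-1})$, $R(a'^{-1},b'^{-1})=(b^{-1},a^{-1})$, and one would like to argue that this orbit structure on the $R$-relations determines $\delta$ combinatorially. Ruling out accidental symmetries of the $R$-data that do not arise from $\delta$ is precisely the point requiring care, and neither you nor the paper has written it down.
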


\begin{corollary}
Formula $(4)$ of Section 3.3 gives a lower bound to the number of Drinfeld-Manin solutions with $\left| X \right|=2(m+l+k)$, where $m,l,k \geq 2$.
\end{corollary}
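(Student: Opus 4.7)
The plan is to combine Theorem~1, Proposition~1, and the explicit construction underlying formula~(4). First I would verify that the parameter $|X|=2(m+l+k)$ corresponds to quotients of products of three trees of valences $2m$, $2l$, $2k$: each tree contributes (respectively) $m$, $l$, $k$ unoriented edges in the quotient cube complex, and the label set $X$ of Theorem~1 is obtained by taking each of these edges with both orientations, giving $|X|=2m+2l+2k$. By Theorem~1, each such cube complex $P$ yields a Drinfeld-Manin solution $R_P$ on a set of exactly this cardinality.

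Next I would invoke Proposition~1: non-isometric cube complexes produce non-isomorphic Drinfeld-Manin solutions. Consequently, any lower bound on the number of pairwise non-isometric cube complexes with the prescribed tree valences automatically descends to a lower bound on the number of non-isomorphic solutions $R$ with $|X|=2(m+l+k)$.

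Finally I would appeal to formula~(4) of Section~3.3, which enumerates an explicit combinatorial family of such cube complexes—for instance by counting valid labelings of the squares satisfying the compatibility conditions needed for the universal cover to be the specified product of three trees. Chaining the three ingredients produces the stated lower bound on the number of Drinfeld-Manin solutions.

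The hard part will be ensuring that distinct combinatorial data counted by formula~(4) genuinely produce non-isometric cube complexes, since two different labelings can describe isometric complexes after a relabeling of $X$. If formula~(4) does not already quotient out by such symmetries, one would either correct by dividing by an automorphism group, or—preferably—exhibit an invariant (the isomorphism type of the cube complex group $G$, the link of a vertex, or the four-cycle structure of the squares) that certifies pairwise non-isometry directly. This rigidity step is what turns formula~(4) into an honest lower bound rather than a mere count of combinatorial types.
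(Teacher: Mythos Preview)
Your outline has the right architecture (Theorem~1 plus Proposition~1 plus a count of complexes), but you have misidentified what formula~(4) actually counts. The quantity $F_{m,l}$ depends on only the two parameters $m,l$: it is the mass of one-vertex \emph{square} complexes with VH-structure of partition size $(2m,2l)$, i.e.\ $2$-dimensional objects covered by a product of \emph{two} trees. It does not enumerate $3$-cube complexes, and the parameter $k$ does not appear in it at all. The paper's route to three dimensions is instead the extension described just before the mass formula: given any $(A,B)$ square complex, adjoin a fresh generating set $C$ with $\#C=2k$ and declare every element of $C$ to commute with every element of $A\cup B$. This manufactures a $3$-cube structure $(A,B,C)$ and hence, by Theorem~1, a Drinfeld--Manin solution on $X=A\cup B\cup C$ with $|X|=2(m+l+k)$. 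Your proof sketch needs this step; without it there is no link between $F_{m,l}$ and complexes whose universal cover is a product of three trees.

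Your final worry about over-counting is also misplaced, but in the opposite direction from what you feared. Formula~(4) is already a mass formula, namely $F_{m,l}=\sum_{S} 1/\#\mathrm{Aut}(S)$ summed over isomorphism classes $S$; each summand is at most $1$, so $F_{m,l}$ is automatically a \emph{lower} bound for the number of isomorphism classes of $(2m,2l)$ square complexes---no extra rigidity invariant is required. What does remain to be noted is that non-isomorphic $(A,B)$-complexes give non-isometric $(A,B,C)$-extensions (the original square complex is recoverable inside the cube complex), so that Proposition~1 may be invoked. With these two corrections---the commuting-$C$ extension and the correct reading of the mass formula---the chain of implications you propose goes through.
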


\section{The connection with the quantum Yang-Baxter equation}
If we consider a vector space $V$ spanned by elements of $X$, we get a solution of the quantum $D$-state Yang-Baxter equation, where $D=\left|X \right|$; see, \cite{H, drinfeld, etingof}. 
In most applications to theoretical physics, $V$  is a finite-dimensional space over $\mathbb{C}$.
The idea of introducing extra structure on a basis of $V$ goes back to \cite{H}, 
where the basis vectors were indexed by elements of a  finite field or ring.
We will show that one may use even more elaborated algebraic and arithmetic structures: namely, algebras coming from Hamiltonian quaternions and more general 
quaternion algebras of finite characteristic. Also, the novelty of our approach lies in the combination of the arithmetic and algebra with the geometry of $CAT(0)$ cube complexes
and buildings. Details on how to introduce the structure of a Bruhat-Tits building on a CAT(0) cube complex can be found in \cite{RSV}. 

In \cite{KaufmanYB}, it was shown that a unitary solution of  the Yang-Baxter equation plays the role of a universal quantum gate. 
It was additionally stated there that it is surprisingly difficult to produce such solutions.
However, every Drinfeld-Manin solution generates unitary solutions of the Yang-Baxter equation and, since we are able to produce several infinite families of such solutions, it would be interesting to investigate possible applications in the spirit of \cite{KaufmanYB}.

In order to understand the connection between knot theory and solvable models, Jones
proposed a procedure he called {\em baxterization} to produce solutions of the Yang-Baxter
equation from representations of the braid group, \cite{jonesBaxterization}, which created a whole area connecting quantum physics, representation theory and theory of C*-algebras:
namely Temperley-Lieb algebras; see for example, \cite{jaffeliuwozn} and references in the paper. 
The connection with the braid group and Temperley-Lieb algebras puts some additional restrictions on solutions of the Yang-Baxter equation.
The solutions obtained by ``complexification'' of Drinfeld-Manin solutions  do not have these restrictions, so we plan to investigate if further generalizations of universal quantum gates are possible \cite{KaufmanYB}. In the case of CAT(0) complexes covered by product of trees, we may produce another class of C*-algebras which are closely related to higher-dimensional generalizations of Thompson groups and can be viewed as higher-dimensional analogues of Cuntz-Krieger algebras \cite{LV, V}.

We would like to mention another potential application here. Possible connections of quantum computation and geometric group theory/combinatorics of words were indicated in \cite{Manin}, but these connections were not fully developed, since quantum mechanics requires explicit constructions of 
geometric objects of dimensions three and higher, and these objects have been constructed only recently \cite{RSV}.    

\section{Connections between new and old invariants}

The standard invariants of solutions of the Yang-Baxter equation are structure groups and structure semigroups (see, for example, \cite{etingof})
which can be defined for any mappings $X^2 \to X^2$ for any set $X$.

\begin{definition}Let $X$ be a set and $S \colon  X^2 \to X^2$ a mapping. 
The {\em structure group} $G(X)$ is the group generated by elements of $X$ with defining relations
 $xy = tz$ when $S(x, y) = (t, z)$.
\end{definition}

\begin{definition}Let $X$ be a set and $S \colon  X^2 \to X^2$ a mapping. 
The {\em structure semigroup} $G'(X)$ is the semigroup generated by elements of $X$ with defining relations
 $xy = tz$ when $S(x, y) = (t, z)$.
\end{definition}

The structure semigroups of the Drinfeld-Manin solutions also posses the structure of higher-rank graphs with one vertex considered, for example, in \cite{LV}. 
There is a connection made between the Yang-Baxter equation and higher-rank graphs in \cite{dilian}, but concrete examples involve a very particular class of solutions of the Yang-Baxter equation, different from ours (one way to show this is to construct the geometric group and its homology groups).

Let $R,P$ be  from Theorem 1. We observe that there is an involution $\delta$ on the set $X$ induced by the directions of edges.
Then the relations of the structure semigroup naturally fall into four equivalence classes corresponding to geometric squares of the complex  $P$.
It is easy to see that the complex cube group is a quotient of the structure group. 

\section{Groups acting simply transitively on products of trees}

\subsection{Complexes covered by products of trees}

We first introduce square complexes covered by products of two trees and then describe how cube complexes covered by products of $n$ trees can be obtained from
$n$ square complexes.
The general reference for $CAT(0)$ cube complexes is \cite{sageev}.

A {\em square complex} $S$ is a $2$-dimensional combinatorial cell complex: its $1$-skeleton consists of a graph $S^1 = (V(S),E(S))$ with set of vertices $V(S)$, and set of oriented edges $E(S)$. The $2$-cells of the square complex come from a set of squares $S(S)$ that are combinatorially glued to the graph $S^1$ as explained below. Reversing the orientation of an edge $e \in E(S)$ is written as $e \mapsto e^{-1}$ and the set of unoriented edges is the quotient set 
\[
\ov{E}(S) = E(S)/(e \sim e^{-1}).
\]

More precisely, a square $\square$ is described by an equivalence class of $4$-tuples of oriented edges $e_i \in E(S)$
\[
\square = (e_1,e_2,e_3,e_4)
\]
where  the origin of $e_{i+1}$ is the terminus of $e_i$ (with $i$ modulo $4$). Such $4$-tuples describe the same square if and only if they belong to the same orbit under the dihedral action  generated by cyclically permuting the edges $e_i$ and by  the reverse orientation  map 
\[
(e_1,e_2,e_3,e_4) \mapsto (e_4^{-1},e_3^{-1},e_2^{-1},e_1^{-1}).
\]
We think of a square-shaped $2$-cell glued to the (topological realization of the) respective edges of the graph $S^1$.
For more details on square complexes, we refer the reader to, for example,  \cite{burger-mozes:lattices}. 
Examples of square complexes are given by products of trees.

\begin{remark}
We note that in our definition of a square complex, each square is determined by its boundary.
The group actions considered in the present paper are related only to such complexes.
\end{remark}

Let $T_l$ denote the $l$-valent tree. The product of trees 
\[
M = T_{m} \times T_{l}
\]
is a Euclidean building of rank $2$ and a square complex.  Here we are interested in  lattices; that is, those groups $\Gamma$ acting discretely and cocompactly on $M$ respecting the structure of square complexes.  The quotient $S = \Gamma \backslash M$ is a finite square complex, typically with an orbifold structure coming from the stabilizers of cells.

We are especially interested in the case where $\Gamma$ is torsion free and acts simply transitively on the set of vertices of $M$. These yield the smallest quotients $S$ without non-trivial orbifold structure. Since $M$ is a CAT(0) space, any finite group stabilizes a cell of $M$ by the Bruhat--Tits fixed point lemma. Moreover, the stabilizer of a cell is profinite,
hence compact, so that a discrete group $\Gamma$ acts with trivial stabilizers on $M$ if and only if $\Gamma$ is torsion free.

Let $S$ be a square complex. For $x \in V(S)$, let $E(x)$ denote the set of oriented edges originating in $x$. 
The {\em link} at the vertex $x$ in $S$ is the (undirected multi-)graph ${\mathbb Lk}_x$ whose set of vertices is $E(x)$ and whose set of edges in ${\mathbb Lk}_x$ joining vertices $a,b \in E(x)$ are given by corners $\gamma$ of squares in $S$ containing the respective edges of $S$,  see \cite{burger-mozes:lattices}. 

A covering space of a square complex admits a natural structure as a square complex such that the covering map preserves the combinatorial structure. In this way, a connected square complex admits a universal covering space.

\begin{proposition}
The universal cover of a finite connected square complex is a product of trees if and only if the link $ {\mathbb Lk}_x$ at each vertex $x$ is a complete bipartite graph.
\end{proposition}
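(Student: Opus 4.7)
The forward direction is direct: in the product $T_m \times T_l$, the set $E((u,v))$ consists of $m$ edges along the first factor and $l$ along the second, and every pair of a first-factor edge with a second-factor edge spans a unique square through $(u,v)$; hence the link at $(u,v)$ is the complete bipartite graph $K_{m,l}$. Since a covering map of square complexes is a local combinatorial isomorphism that restricts to a graph isomorphism on links, any connected square complex whose universal cover is $T_m \times T_l$ inherits this property. For the converse, let $\tilde{S}$ denote the universal cover of $S$; every link of $\tilde{S}$ is likewise a complete bipartite graph, which has girth $4$ (for $m, l \geq 2$), so Gromov's link condition holds and $\tilde{S}$ is a CAT(0) square complex. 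My plan is to promote the bipartition of each link into a global $2$-coloring of the edges of $\tilde{S}$ --- ``horizontal'' and ``vertical'' --- and then read off the two tree factors.

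I fix a base vertex $x_0$ and label the two sides of the bipartition of $\mathbb{Lk}_{x_0}$ as $H$ and $V$. To propagate the labeling along any oriented edge $e \colon x \to y$, I impose the rule that $e^{-1} \in E(y)$ inherit the type of $e \in E(x)$; the remaining edges of $E(y)$ are then typed by the unique bipartition of $\mathbb{Lk}_y$. Around any square $(e_1, e_2, e_3, e_4)$, at each corner the two link vertices $e_i^{-1}$ and $e_{i+1}$ lie on opposite sides of the local bipartition, so tracing the assignments around the boundary shows the labels alternate $H, V, H, V$ and that the propagation closes up consistently after one loop. Since $\tilde{S}$ is simply connected, any two edge-paths between the same endpoints differ by a sequence of such square-moves coming from a van Kampen disc diagram, so the propagation is path-independent and yields a well-defined coloring $\chi \colon \ov{E}(\tilde{S}) \to \{H, V\}$.

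Let $\tilde{S}_H$ and $\tilde{S}_V$ denote the subgraphs of $\tilde{S}$ spanned by edges of the corresponding type; each vertex of $\tilde{S}_H$ is $m$-valent. A non-trivial loop in $\tilde{S}_H$ would bound a reduced van Kampen disc diagram $D$ in $\tilde{S}$ whose boundary consists only of horizontal edges. Since every square of $\tilde{S}$ contains two horizontal and two vertical edges on opposite sides, the hyperplane in $D$ dual to any interior vertical edge would have to terminate on a vertical boundary edge --- but the boundary of $D$ has none. Hence $D$ has no $2$-cells and the loop is trivial, so each component of $\tilde{S}_H$ is a tree, and being $m$-regular it is a copy of $T_m$; symmetrically each component of $\tilde{S}_V$ is $T_l$. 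To conclude that $\tilde{S} \cong T_m \times T_l$, I would show that each horizontal sheet meets each vertical sheet in exactly one vertex, giving a bijection $V(\tilde{S}) \leftrightarrow V(T_m) \times V(T_l)$ that extends combinatorially to squares.

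This final identification is where I expect the main difficulty: upgrading two transverse tree foliations into an honest Cartesian product rather than some twisted fibration. It is precisely here that the \emph{completeness} of the bipartite link is essential --- at every vertex every horizontal edge together with every vertical edge spans a square --- which inductively closes up the grid spanned by a horizontal geodesic and a vertical geodesic into an isometric rectangular subcomplex of $\tilde{S}$. A more conceptual route is to invoke the splitting theorem for CAT(0) cube complexes (the cube-complex analogue of a theorem of Ballmann--Brin): whenever the link at every vertex is a spherical join of two subcomplexes, the cube complex splits metrically as a product of the corresponding factors. Since $K_{m,l}$ is the spherical join of two discrete sets of sizes $m$ and $l$, this immediately gives $\tilde{S} \cong T_m \times T_l$.
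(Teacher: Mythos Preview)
Your argument is correct, and in its final step it coincides with the paper's: the paper's entire proof is the one-line remark that the statement ``is well known and follows, for example, from \cite[Theorem~C]{Ballmann-Brin1995}''. Your coloring-propagation and hyperplane arguments are sound and essentially reconstruct the VH-decomposition that the paper treats separately in the next section (Proposition~\ref{prop:VHstructureanduniversalcover}); but since you ultimately appeal to the Ballmann--Brin splitting theorem for the product identification, the intermediate work is not strictly needed --- that theorem already takes as input only the hypothesis that each link is a spherical join (here, a complete bipartite graph) and outputs the product splitting directly. Two small points you glossed over: the constancy of the part sizes $m,l$ across all vertices follows from completeness of the links together with connectedness of $\tilde S$ (opposite sides of a square give a bijection between the vertical edges at its two horizontal endpoints, and vice versa), and the degenerate cases $m=1$ or $l=1$ should be noted separately since then the link has no $4$-cycles but is a tree, so Gromov's condition holds trivially.
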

\begin{proof}
This is well known and follows, for example, from \cite[Theorem~C]{Ballmann-Brin1995}.
\end{proof}

\subsection{VH-structure}  \label{sec:VHstructure}

A  {\em vertical/horizontal structure}, in short a {\em VH-structure}, on a square complex $S$ consists of a bipartite structure $\ov{E}(S) = E_v \sqcup E_h$ on the set of unoriented edges of $S$ such that for every vertex $x \in S$ the link ${\mathbb Lk}_x$ at $x$ is the complete bipartite graph on the induced partition of $E(x) = E(x)_v \sqcup E(x)_h$. Edges in $E_v$ (resp.\ in $E_h$) are called vertical (resp.\ horizontal) edges. See \cite{wise1} for general facts on VH-structures. The {\em partition size} of the VH-structure is the function  $V(S) \to 
\mathbb N \times \mathbb N$ on the set of vertices 
\[
x \mapsto (\# E(x)_v, \# E(x)_h)
\]
or just the corresponding tuple of integers if the function is constant. Here $\#(-)$ denotes the cardinality of a finite set. 

We record the following basic fact; see \cite{burger-mozes:lattices} after their Proposition 1.1.

\begin{proposition} \label{prop:VHstructureanduniversalcover}
Let $S$ be a square complex. Then the following are equivalent:
\begin{enumerate}
\item 
The universal cover of $S$ is a product of trees $T_m \times T_l$ and the group of covering transformations does not interchange the factors.
\item 
There is a VH-structure on $S$ of constant partition size $(m,l)$. \hfill $\square$
\end{enumerate}
\end{proposition}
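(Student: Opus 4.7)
The plan is to reduce the statement to the preceding Proposition, which already gives the equivalence between ``universal cover is a product of trees'' and ``every link is a complete bipartite graph''. The additional content here is matching the combinatorial bipartition on the links (coming from the VH-structure) with the geometric decomposition of the universal cover into two tree factors (and controlling the deck group's action on those factors).

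For (2) $\Rightarrow$ (1), I would begin by observing that the VH-structure makes every link $\mathbb{Lk}_x$ a complete bipartite graph $K_{m,l}$ by definition, so the previous Proposition instantly yields that the universal cover $\widetilde S$ is a product of two trees. The bipartition $\ov E(S) = E_v \sqcup E_h$ then lifts to a $\pi_1(S)$-equivariant bipartition $\ov E(\widetilde S) = \widetilde E_v \sqcup \widetilde E_h$. The next step is to argue that the subgraph of $\widetilde S^1$ spanned by $\widetilde E_v$ through any fixed vertex is a connected $m$-valent tree (and similarly for the $h$-subgraph with valence $l$), using the complete-bipartite link condition to check local valency and the square structure to rule out vertical cycles. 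Identifying these subforests with the two tree factors of $\widetilde S$ gives the partition sizes $m$ and $l$; and since the bipartition exists already on $S$, every deck transformation preserves the labels ``vertical'' and ``horizontal'' and therefore cannot swap the factors.

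For (1) $\Rightarrow$ (2), assume $\widetilde S = T_m \times T_l$ and that the deck group $\Gamma$ respects the factorisation. There is a canonical bipartition of $\ov E(\widetilde S)$ according to which of the two factors an edge projects onto nontrivially, and with respect to this bipartition each link in $\widetilde S$ is $K_{m,l}$. Because $\Gamma$ does not interchange the two factors, this bipartition is $\Gamma$-equivariant and descends to a bipartition of $\ov E(S)$; the complete-bipartite link property descends as well, producing a VH-structure of constant partition size $(m,l)$.

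The main obstacle I anticipate is the step in (2) $\Rightarrow$ (1) of identifying the subforests spanned by the vertical and horizontal lifts with the actual tree factors of the product decomposition supplied by the earlier Proposition: one must check not merely that some product structure exists but that the factorisation aligns with the given VH-labelling. This should be handled either by a direct valency and acyclicity argument using the complete-bipartite link condition together with the square gluings, or by invoking the rigidity in \cite{Ballmann-Brin1995} so that the combinatorial and geometric factorisations coincide; either way, constant partition size $(m,l)$ then forces the factors to be $T_m$ and $T_l$.
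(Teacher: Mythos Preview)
The paper does not actually prove this proposition: it records it as a ``basic fact'' with a reference to Burger--Mozes (``see \cite{burger-mozes:lattices} after their Proposition~1.1'') and closes the statement with a box. So there is nothing in the paper to compare your argument against beyond that citation.

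Your sketch is correct and supplies precisely what the paper omits. The reduction to the preceding proposition for the existence of \emph{some} product-of-trees structure is the right first move, and your handling of the two directions is sound: for $(1)\Rightarrow(2)$ the key point is that the natural edge-bipartition of $T_m\times T_l$ is $\Gamma$-invariant exactly when $\Gamma$ does not swap the factors; for $(2)\Rightarrow(1)$ the lifted bipartition is deck-invariant by construction. The ``main obstacle'' you flag---aligning the combinatorial VH-bipartition on $\widetilde S$ with the geometric product decomposition---is indeed the only nontrivial step. Your proposed direct argument works cleanly: in a VH square complex the edges of every square alternate vertical/horizontal (since corners are edges of the complete bipartite link), so opposite sides of each square have the same type; hence the vertical edges through a fixed vertex of $\widetilde S$ span a convex $m$-valent subtree, the horizontal edges an $l$-valent subtree, and the complete-bipartite link condition then forces $\widetilde S$ to be the product of these two subtrees. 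Invoking \cite{Ballmann-Brin1995} is also fine but heavier than necessary.
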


\begin{corollary} \label{cor:latticesversusVH}
Torsion free cocompact lattices $\Gamma$ in  $\mathbb Aut(T_m) \times \mathbb Aut(T_l)$, not interchanging the factors and 
up to conjugation, correspond uniquely to finite square complexes with a VH-structure of partition size $(m,l)$ up to isomorphism.
\end{corollary}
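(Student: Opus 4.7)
The plan is to exhibit mutually inverse constructions between the two sets in the correspondence and verify that they respect the quotients by conjugation and isomorphism. Proposition~\ref{prop:VHstructureanduniversalcover} is the workhorse, and the role of the torsion-freeness/cocompactness assumptions is to ensure that the natural map $\Gamma \mapsto \Gamma\backslash (T_m \times T_l)$ really lands in \emph{finite} VH square complexes, and that the inverse map really produces a discrete torsion-free lattice.

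First I would do the forward direction. Given a torsion-free cocompact lattice $\Gamma \subset \mathrm{Aut}(T_m)\times \mathrm{Aut}(T_l)$ that does not interchange the factors, set $M = T_m \times T_l$ and $S = \Gamma \backslash M$. Since $M$ is CAT$(0)$ and $\Gamma$ is discrete, the Bruhat--Tits fixed point lemma (invoked in the text just before the definition of the link) shows that every finite subgroup of $\Gamma$ is trivial, i.e.\ $\Gamma$ acts freely on $M$. Cocompactness then makes $S$ a finite square complex and $\Gamma = \pi_1(S)$ with universal cover $M$. Because $\Gamma$ preserves the factors, the two classes of edges of $M$ (those along $T_m$-fibres and those along $T_l$-fibres) descend to a $\Gamma$-invariant bipartition $\overline{E}(S) = E_v \sqcup E_h$; the link at any vertex of $M$ is already the complete bipartite graph $K_{m,l}$, and this property passes to $S$ by the local isomorphism of the covering, giving a VH-structure of partition size $(m,l)$.

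Second, for the reverse direction, start with a finite square complex $S$ equipped with a VH-structure of partition size $(m,l)$. By Proposition~\ref{prop:VHstructureanduniversalcover}, the universal cover $\widetilde S$ is isometric to $T_m \times T_l$, and the deck transformation group $\Gamma = \pi_1(S)$ sits in $\mathrm{Aut}(T_m) \times \mathrm{Aut}(T_l)$ without interchanging factors. Deck transformations act freely, so $\Gamma$ is torsion-free; finiteness of $S$ gives cocompactness; and the action on $\widetilde S$ being properly discontinuous makes $\Gamma$ a discrete subgroup, hence a lattice. Thus we obtain the desired lattice.

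Third, I would check that the two constructions are mutually inverse up to the stated equivalences. An isomorphism of VH square complexes $S \cong S'$ lifts to an equivariant isometry of their universal covers $T_m \times T_l \to T_m \times T_l$, and conjugating by this isometry carries $\pi_1(S)$ to $\pi_1(S')$; conversely, a conjugation of $\Gamma$ inside $\mathrm{Aut}(T_m)\times \mathrm{Aut}(T_l)$ is implemented by an automorphism of $M$ that descends to an isomorphism of the quotient square complexes (respecting the VH-structure, since the conjugating element does not interchange factors). Starting from $S$, passing to $\pi_1(S)$ and back clearly recovers $S$ up to isomorphism, and the other round trip recovers $\Gamma$ up to conjugation.

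The only real subtlety, and the step I expect to require the most care, is to verify that conjugacy of lattices corresponds \emph{exactly} to isomorphism of VH square complexes (and not something coarser); this requires knowing that every automorphism of $T_m \times T_l$ that preserves the factors lies in $\mathrm{Aut}(T_m) \times \mathrm{Aut}(T_l)$, which is a standard fact for products of trees, and checking that an isomorphism of VH square complexes really does lift to such a factor-preserving automorphism. Once this is in place, the bijection between conjugacy classes of lattices and isomorphism classes of finite VH square complexes follows.
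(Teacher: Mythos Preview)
Your proposal is correct and follows essentially the same approach as the paper: both directions use Proposition~\ref{prop:VHstructureanduniversalcover} as the key input, with the forward map $\Gamma \mapsto \Gamma\backslash(T_m\times T_l)$ and the backward map $S \mapsto \pi_1(S)$ acting as deck transformations on the universal cover. The paper's proof is terser---it handles the conjugation issue simply by noting that the identification of $\pi_1(S,x)$ with a subgroup of $\mathrm{Aut}(T_m)\times\mathrm{Aut}(T_l)$ depends on the choice of basepoint only up to conjugation---whereas you spell out the mutual-inverse verification and the factor-preserving automorphism point more explicitly, but the underlying argument is the same.
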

\begin{proof}
A lattice $\Gamma$ yields a finite square complex $S = \Gamma \backslash T_m \times T_l$ of the desired type. Conversely, a finite square complex $S$ with VH-structure of constant partition size $(m,l)$ has universal covering space $M = T_m \times T_l$ by Proposition~\ref{prop:VHstructureanduniversalcover}, and the choice of a base point vertex $\tilde{x} \in M$ above the vertex $x \in S$  identifies $\pi_1(S,x)$ with the lattice $\Gamma = \mathbb Aut(M/S) \subseteq \mathbb Aut(T_m) \times \mathbb Aut(T_l)$. The lattice depends on the  chosen base points only up to conjugation.
\end{proof}

Simply transitive torsion free lattices not interchanging the factors as in Corollary~\ref{cor:latticesversusVH} correspond to square complexes with only one vertex and a VH-structure, necessarily of constant partition size. These will be studied in the next section.

\subsection{$1$-vertex square complexes}

Let $S$ be a square complex with just one vertex $x \in S$ and a VH-structure $\ov{E}(S) = E_v \sqcup E_h$. Passing from the origin to the terminus of an oriented edge induces a fixed point free involution on $E(x)_v$ and on $E(x)_h$. Therefore the partition size is necessarily a tuple of even integers.

\begin{definition} \label{defi:BMstructureingroup}
A  {\em vertical/horizontal structure}, in short {\em VH-structure},  in a group $G$ is an ordered pair $(A,B)$ of finite subsets $A,B \subseteq G$ such that the following holds.
\begin{enumerate}
\item \label{defiitem:AandBinvolution} Taking inverses induces fixed point free involutions on $A$ and $B$.
\item The union $A \cup B$ generates $G$.
\item \label{defiitem:ABequalsBA} The product sets $AB$ and $BA$ have size $\#A \cdot \#B$ and $AB = BA$.
\item \label{defiitem:AB2torsion} The sets $AB$ and $BA$ do not contain $2$-torsion.
\end{enumerate}
The tuple $(\#A,\#B)$ is called the {\em valency vector} of the VH-structure in $G$.
\end{definition}


Similar to what is described in \cite[Section 6.1]{burger-mozes:lattices},
starting from a VH-structure we have the following construction 
\begin{equation} \label{eq:constructionSAB}
(A,B) \leadsto S_{A,B}
\end{equation}
yields a square complex $S_{A,B}$ with one vertex and VH-structure starting from a VH-structure $(A,B)$ in a group $G$. The vertex set $V(S_{A,B})$ contains just one vertex $x$. The set of oriented edges of $S_{A,B}$ is the disjoint union 
\[
E(S_{A,B}) = A \sqcup B
\]
with the orientation reversion map given by $e \mapsto e^{-1}$. Since $A$ and $B$ are preserved under taking inverses, there is a natural vertical/horizontal structure such that $E(x)_h = A$ and $E(x)_v = B$. 
The squares of $S_{A,B}$ are constructed as follows. Every relation in $G$ 
\begin{equation}\label{eq:relationabba} 
ab = b'a'
\end{equation}
with $a,a' \in A$ and $b,b' \in B$ (not necessarily distinct) gives rise to a $4$-tuple 
\[
(a,b,a'^{-1},b'^{-1}).
\] 
The following relations are equivalent to \eqref{eq:relationabba} 
\begin{eqnarray*}
a'b^{-1} & = &  b'^{-1}a, \\
a^{-1}b' & = & ba'^{-1}, \\
a'^{-1}b'^{-1} & = & b^{-1}a^{-1}.
\end{eqnarray*}
and we consider the four $4$-tuples so obtained
\[
(a,b,a'^{-1},b'^{-1}), \quad (a',b^{-1},a^{-1},b'), \quad (a^{-1},b',a',b^{-1}), \quad (a'^{-1},b'^{-1},a,b)
\]
as equivalent. A square $\square$ of $S_{A,B}$ consists of an equivalence class of such $4$-tuples arising from a relation of the form \eqref{eq:relationabba}.

\begin{lemma}
The link ${\mathbb Lk}_x$ of $S_{A,B}$ in $x$ is the complete bipartite graph $L_{A,B}$ with vertical vertices labelled by $A$ and horizontal vertices labelled by $B$.
\end{lemma}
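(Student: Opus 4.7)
The plan is to read off both the vertex set and the edge set of $\mathbb{Lk}_x$ directly from the construction of $S_{A,B}$ in \eqref{eq:constructionSAB}, and then match these against $L_{A,B}$. Since $S_{A,B}$ has a single vertex and $E(x) = A \sqcup B$, the vertex set of the link is automatically $A \sqcup B$ with the bipartition predicted by $L_{A,B}$. For the edges, I would unpack the contribution of a single square: a relation $ab = b'a'$ with $a,a' \in A$ and $b,b' \in B$ produces the $4$-tuple $(a,b,a'^{-1},b'^{-1})$, and its four corners at $x$ give the link edges
\[
\{a^{-1},b\},\ \{b^{-1},a'^{-1}\},\ \{a',b'^{-1}\},\ \{b',a\},
\]
each joining an element of $A$ to an element of $B$. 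Fixed-point-freeness of the involutions in Definition~\ref{defi:BMstructureingroup}(1) together with $A \cap B = \emptyset$ shows these four unordered pairs are pairwise distinct, so already at this stage $\mathbb{Lk}_x$ is bipartite with respect to $A \sqcup B$.

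Next I would prove that every pair $(\alpha,\beta) \in A \times B$ is joined by at least one edge. Condition (3) in Definition~\ref{defi:BMstructureingroup} asserts $|AB|=|A|\cdot|B|$ and $AB = BA$, so the element $\alpha^{-1}\beta$ lies in $AB = BA$ and has a unique factorization $\alpha^{-1}\beta = b'a'$ with $b' \in B$, $a' \in A$. Setting $a = \alpha^{-1}$, $b = \beta$ exhibits a defining relation whose associated square has first corner $\{a^{-1},b\} = \{\alpha,\beta\}$. This produces the required edge.

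The final step is a counting argument to upgrade ``at least one'' to ``exactly one''. Condition (3) makes $(a,b)\mapsto (b',a')$ a bijection $A\times B \to B\times A$, so there are exactly $|A|\cdot|B|$ tuples of the form $(a,b,a'^{-1},b'^{-1})$ starting with an $A$-element. Each square of $S_{A,B}$ corresponds to a dihedral orbit of size $8$ containing $4$ such tuples, yielding $|A|\cdot|B|/4$ squares and $|A|\cdot|B|$ total corners; compared with $|A \times B|= |A|\cdot|B|$, each pair is hit precisely once, so $\mathbb{Lk}_x = L_{A,B}$. The step I expect to require most care is verifying that every dihedral orbit genuinely has size $8$: a nontrivial stabilizer would force relations such as $ab = (ab)^{-1}$, exhibiting $2$-torsion in $AB$, which is exactly what condition (4) excludes. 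Once this non-degeneracy is secured, the counting closes the proof cleanly.
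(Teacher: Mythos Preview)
Your proposal is correct and follows essentially the same route as the paper: show that every pair $(\alpha,\beta)\in A\times B$ arises as a corner of some square via $AB=BA$, then count squares to get exactly $\#A\cdot\#B$ corners and conclude equality with $L_{A,B}$. The paper phrases the non-degeneracy step as showing the four left-hand sides $\{ab,\,a'b^{-1},\,a^{-1}b',\,a'^{-1}b'^{-1}\}$ form a set of cardinality $4$ (using conditions (1), (3), (4)), while you phrase it as the dihedral orbit having full size; these are the same computation, and your closing paragraph correctly identifies that condition~(4) is needed here---note that your earlier aside claiming the four corner pairs are distinct from condition~(1) and $A\cap B=\emptyset$ alone is slightly too quick (the coincidence $\{a^{-1},b\}=\{a',b'^{-1}\}$ requires ruling out $(ab)^2=1$), but this is harmless since the final counting argument does not rely on it.
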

\begin{proof}
By \ref{defiitem:ABequalsBA} of Definition~\ref{defi:BMstructureingroup} every pair $(a,b) \in A \times B$ occurs on the left hand side  in a relation of the form \eqref{eq:relationabba} and therefore the link ${\mathbb Lk}_x$ contains $L_{A,B}$.

If \eqref{eq:relationabba} holds, then the set of left hand sides of equivalent relations 
\[
\{ab, a'b^{-1},a^{-1}b',a'^{-1}b'^{-1}\}
\]
is a set of cardinality $4$, because $A$ and $B$ and $AB$ do not contain $2$-torsion by Definition~\ref{defi:BMstructureingroup} \ref{defiitem:AandBinvolution} + \ref{defiitem:AB2torsion} and the right hand sides of the equations are unique by 
Definition~\ref{defi:BMstructureingroup} \ref{defiitem:ABequalsBA}. Therefore $S_{A,B}$ only contains $(\#A \cdot \#B)/4$ squares. It follows that ${\mathbb Lk}_x$ has at most as many edges as $L_{A,B}$, and, since it contains $L_{A,B}$, must agree with it.
\end{proof}

\begin{definition}
We will call the complex $S_{A,B}$ a $(\#A,\#B)$-complex, keeping in mind, that there are many complexes with the same valency vector. Also, if a group is a fundamental group of 
a $(\#A,\#B)$-complex, we will call it a $(\#A,\#B)$-group.
\end{definition}

\begin{example}
Figure 1 shows an example of a $(4,4)$-group.
\end{example}

\begin{figure}{Fig.1}

\includegraphics[height=2.7cm]{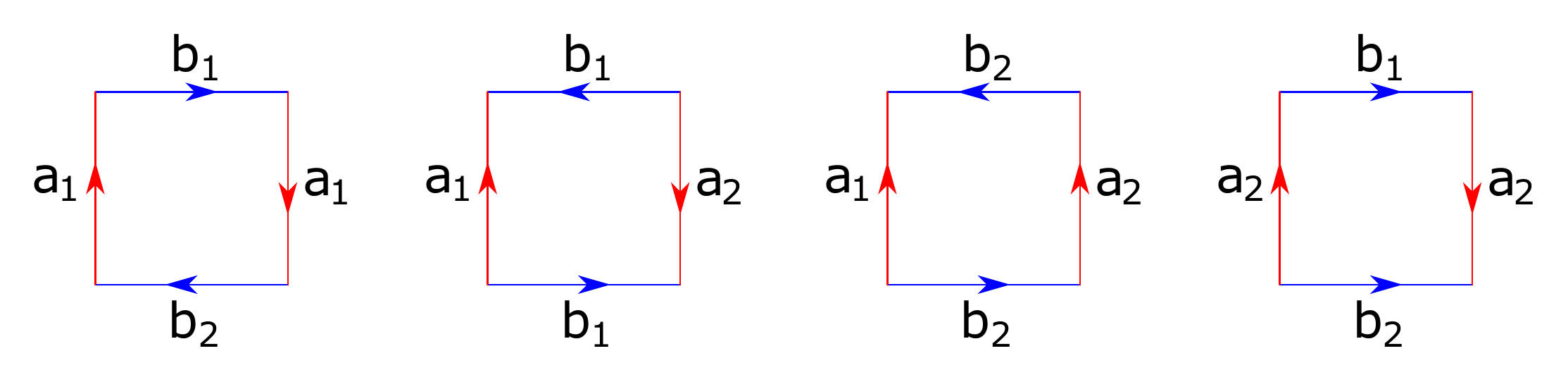}

\end{figure}

\newpage
\section{$n$-Cube groups}

\subsection{Definition of $n$-Cube groups}
We generalise VH-structure to the $n$-dimensional case.

\begin{definition} \label{defi:cubestructure}
An  {\em $n$-cube structure}  in a group $G$ is an ordered tuple $(A_1, \ldots, A_n)$ of finite subsets $A_i \subseteq G$ such that the following hold:
\begin{enumerate}
\item \label{defiitem:AandBinvolution} Taking inverses induces fixed point free involutions on $A_i$.
\item The union $\cup A_i$ generates $G$.
\item \label{defiitem:ABequalsBA} The product sets $A_iA_j$ and $A_jA_i$ have size $\#A_i \cdot \#A_j$ and $A_iA_i = A_jA_i$.
\item \label{defiitem:AB2torsion} The sets $A_iA_j$ and $A_jA_i$ do not contain $2$-torsion.
\item Group $G$ acts simply transitively on Cartesian product of $n$ trees.
\end{enumerate}
The tuple $(\#A_1, \ldots,\#A_j)$ is called the {\em valency vector} of the $n$cube-structure in $G$, and sets $(A_1, \ldots, A_n)$ are called generating sets.
\end{definition}

We note that each pair $A_i,A_j\subseteq G$ forms a subgroup $M$ of $G$ equipped with VH-structure. 

\subsection{Proof of Theorem 1}

Without loss of generality, we may consider groups with $3$-cube structures.
Let $G$ be a group with $3$-cube structure with generating sets $A,B,C \subseteq G$. Since $G$ acts simply transitively on a Cartesian product
of three trees $\Delta$, the set $X=A\cup B\cup C$ labels the edges of the quotient cube complex $P$. We show that the map $R$ from the formulation
of the Theorem 1 satisfies the Yang-Baxter  equation, namely to show, that for any $(x,y,z) \in X$,
$R^{12}R^{23}R^{12}(x,y,z) = R^{23}R^{12}R^{23}(x,y,z)$. Without loss of generality, we have to consider three cases, 
\begin{enumerate}
\item $x\in A, y\in B, z \in C$,
\item $x\in A, y\in A, z \in C$, 
\item $x\in A, y\in A, z \in A$. 
\end{enumerate}
We start with the first case. Consider the cube $K$ in $P$ such that $xyz$ is the path between the most distanced vertices in 
$K$ (path $a_1b_1c_2$ on fig. 2, for example). Then each of $R^{12}R^{23}R^{12}$ and $R^{23}R^{12}R^{23}$ will move $xyz$ to $x'y'z'$, where $x'y'z'$
is the most distanced path  parallel to $xyz$ in $K$, (path $c_4b_2^{-1}a_1^{-1}$  on fig.2). Second case follows from the fact that $A,C\subseteq G$ forms a subgroup $M$ of $G$ equipped with VH-structure. The third case is true, since $R$ is the identity map at $A$ by definition.

\begin{figure}{Fig. 2}

\includegraphics[height=4cm]{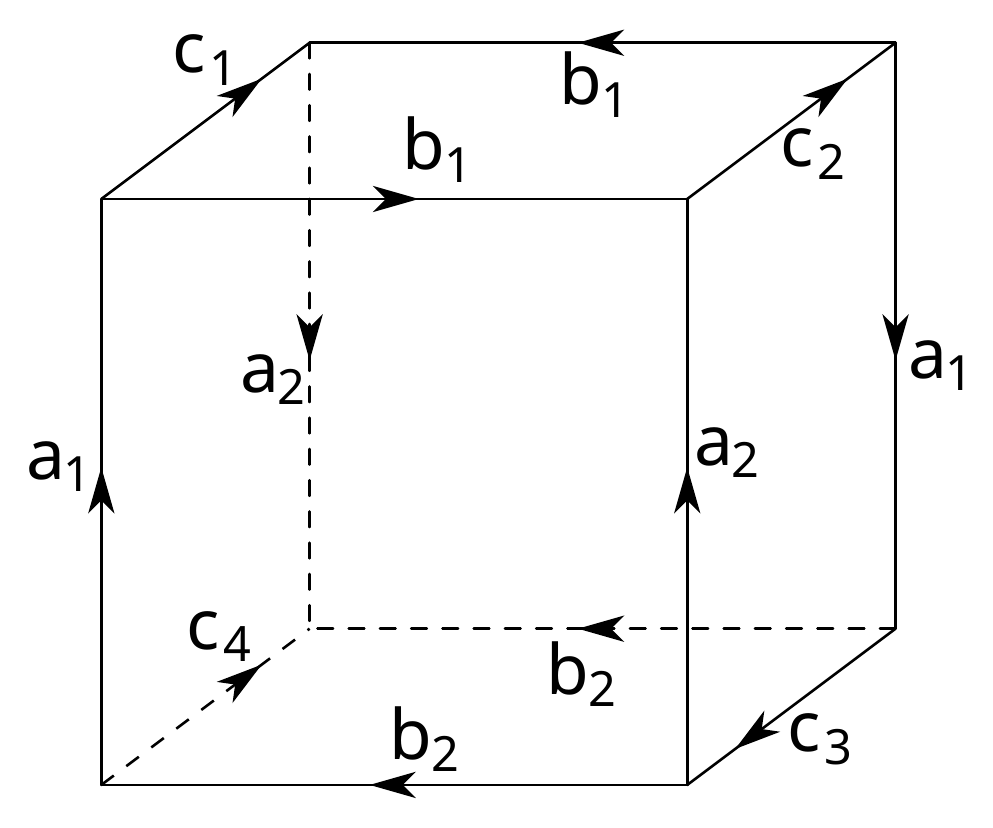}

\end{figure}

\subsection{Mass formula for complexes with $3$-cube structure}  \label{sec:mass_formula}

In this section we show that the number of non-isomorphic complexes with $3$-cube structure grows at least factorially.
Every one vertex square complex with VH-structure $(A,B)$ leads to a complex with $3$-cube structure.
Let $(A,B)$ be a VH-structure in a group $T$. We introduce a set $C$ and add to the relations of $T$ all commutators of $C$ with $A\cup B$.
Then the resulting group $G$ will be equipped with a $3$-cube structure $(A,B,C)$.
So, the mass formula for the number of one vertex square complexes with VH-structure up to isomorphism applies for the case of cube complexes.

For completeness, we present a mass formula for the number of one vertex square complexes with VH-structure up to isomorphism where each square complex is counted with the inverse order of its group of automorphisms as its weight \cite{stix-av}.

Let $A$ (resp.\ $B$, \ $C$) be a set with fixed point free involution of size $2m$ (resp.\ $2l$, \ $2k$ ). 
In order to count one vertex square complexes $S$ with VH-structure with vertical/horizontal partition $A \sqcup B$ of oriented edges  we introduce the generic matrix
\[
X = (x_{ab})_{a\in A, b \in B}
\]
with rows indexed by $A$ and columns indexed by  $B$ and with $(a,b)$-entry a formal variable $x_{ab}$. Let $X^t$ be the transpose of $X$, let $\tau_A$ (resp.\ $\tau_B$) be the permutation matrix for $e \mapsto e^{-1}$ for $A$ (resp.\ $B$). For a square $\square$  we set 
\[
x_\square = \prod_{e \in \square} x_e
\]
where the product ranges over the edges $e = (a,b)$ in the link of $S$ originating from $\square$ and $x_e = x_{ab}$. 
Then the sum of the $x_\square$, when $\square$ ranges over all possible squares with edges from $A \sqcup B$, reads 
\[
\sum_{\square} x_\square =  \frac{1}{4}\tr\big((\tau_A X \tau_B X^t\big)^2),
\]
and the number of  one vertex square complexes $S$ with VH-structure of partition size $(2m,2n)$ and edges labelled by $A$ and $B$  is given by 
\begin{equation} \label{eq:labeled_mass_formula}
\widetilde{F}_{m,l} = \frac{1}{(ml)!} \cdot \frac{\partial^{4ml}}{\prod_{a,b} \partial x_{ab} } \left( \frac{1}{4}\tr\big((\tau_A X \tau_B X^t\big)^2)\right)^{ml}. 
\end{equation}
Note that this is a constant polynomial.

We can turn this into a mass formula for the number of one vertex square complexes with VH-structure up to isomorphism where each square complex is counted with the inverse order of its group of automorphisms as its weight. We simply need to divide by the order of the universal relabelling 
\[
\#({\mathbb Aut}(A,\tau_A) \times {\mathbb Aut}(B,\tau_B)) = 2^l(l)! \cdot 2^m(m)!.
\]
Hence the mass of one vertex square complexes with VH-structure is given by
\begin{equation} \label{eq:mass_formula}
F_{m,l} = \frac{1}{2^{l+m+2lm}(l)! \cdot (m)! \cdot (ml)!} \cdot \frac{\partial^{4ml}}{\prod_{a,b} \partial x_{ab} } \left( \tr\big((\tau_A X \tau_B X^t\big)^2)\right)^{ml}.
\end{equation}
The formula \eqref{eq:labeled_mass_formula} reproduces the numerical values of $\widetilde{F}_{m,l}$ for small values $(2m,2l)$ that were computed by 
Rattaggi in \cite[Table B.3]{rattaggi:thesis}. 
Here small means $ml \leq 10$.

\subsection{Explicit Drinfeld-Manin solutions}
Several infinite series of groups acting simply-transitively on products of $n$ trees were constructed in \cite{RSV} so by Theorem 1 every such group
gives a Drinfeld-Manin solution. We adopt results from \cite{RSV} to give an explicit construction for a map $R$ satisfying Definition 1.
Let $q$ be a prime power. 
We describe the presentation of $\Lambda_S$ in terms of finite fields only.  Let 
\[
\delta \in \bF_{q^2}^\times
\]
be a generator of the multiplicative group of the field with $q^2$ elements. If $i,j \in \bZ/(q^2-1)\bZ$ are 
\[
i \not\equiv j \pmod{q-1},
\]
then $1+\delta^{j-i} \not= 0$, since otherwise
\[
1 = (-1)^{q+1} = \delta^{(j-i)(q+1)} \not= 1,
\]
a contradiction. Then there is a unique $x_{i,j} \in \bZ/(q^2-1)\bZ$ with 
\[
\delta^{x_{i,j}} = 1  + \delta^{j-i}.
\]
With these $x_{i,j}$ we set $y_{i,j} := x_{i,j} + i - j$, so that 
\[
\delta^{y_{i,j}} = \delta^{x_{i,j} + i - j} = (1  + \delta^{j-i}) \cdot \delta^{i-j} = 1  + \delta^{i-j}.
\]
We moreover set  
\begin{align*}
l(i,j) & := i -  x_{i,j}(q-1), \\
k(i,j) & := j -  y_{i,j}(q-1).
\end{align*}

If $\alpha = \delta^i$ and $\beta = \delta^j$, then 
\begin{equation}
\label{eq:concreterelation1}
\delta^{k(i,j)}  = \delta^{j -  y_{i,j}(q-1)} = \delta^j (1+ \delta^{i-j})^{1-q} =  \frac{\delta^i + \delta^j}{(1+ \delta^{i-j})^q} =  \frac{\delta^i + \delta^j}{(\delta^i + \delta^{j})^q} \cdot \delta^{jq} = \zeta_{\alpha}(\beta) \beta,
\end{equation}
and
\begin{equation}
\label{eq:concreterelation2}
\delta^{l(i,j)}  = \delta^{i-  x_{i,j}(q-1)} = \delta^i (1+ \delta^{j-i})^{1-q} =  \frac{\delta^i + \delta^j}{(1+ \delta^{j-i})^q} =  \frac{\delta^i + \delta^j}{(\delta^i + \delta^{j})^q} \cdot \delta^{iq} = \zeta_{\beta}(\alpha) \alpha.
\end{equation}

Let now $M \subseteq \bZ/(q^2-1)\bZ$ be a union of cosets under $(q-1)\bZ/(q^2-1)$, we denote the number of cosets by $n$.

It was shown in \cite{RSV} that the following groups act simply transitively on product of $n$ trees.
\[
\Gamma_{M,\delta} = \left\langle a_i  \text{ for all } i \in M \ \left| 
\begin{array}{c}
a_{i+ (q^2 - 1)/2}  a_i = 1 \text{ for all $i \in M$}, \\
a_i a_j = a_{k(i,j)}a_{l(i,j)} \text{ for all $i,j \in M$ with $i \not\equiv j \pmod{q-1}$}
\end{array}
\right.\right\rangle
\]
if $q$ is odd, and if $q$ is even:
\[
\Gamma_{M,\delta} = \left\langle a_i  \text{ for all } i \in M \ \left| 
\begin{array}{c}
a_i^2 = 1 \text{ for all $i \in M$}, \\
a_i a_j = a_{k(i,j)}a_{l(i,j)} \text{ for all $i,j \in M$ with $i \not\equiv j \pmod{q-1}$}
\end{array}
\right.\right\rangle .
\]

By Theorem 1, the map $R$ is the following:
$R(a_i, a_j) = (a_{k(i,j)}a_{l(i,j)})$  for all $i,j \in M$ with $i \not\equiv j \pmod{q-1}$ and $R(a_i,a_j)=Id $ otherwise.

The structure semigroup of this solution is 
\[
\Gamma^{\prime}_{M,\delta} = \left\langle a_i  \text{ for all } i \in M \ \left| 
\begin{array}{c}
a_i a_j = a_{k(i,j)}a_{l(i,j)} \text{ for all $i,j \in M$ with $i \not\equiv j \pmod{q-1}$}
\end{array}
\right.\right\rangle
\]

\begin{example}
\label{ex:666}
We compute the smallest example in dimension $3$ given by $q=5$ and
\[
M = \{i \in \bZ/24\bZ \ ; \ 4 \nmid i\}.
\]
The group $\Gamma_1$ acts vertex transitively on product of three trees
\[
\RT_{6} \times  \RT_{6} \times  \RT_{6},
\]

\[
\Gamma_{1} = \left\langle \begin{array}{c}
a_1,a_5,a_9,a_{13},a_{17},a_{21}, \\
b_2,b_6,b_{10},b_{14},b_{18},b_{22}, \\
c_3,c_7,c_{11},c_{15},c_{19},c_{23}
\end{array}
\ \left| 
\begin{array}{c}
a_ia_{i+12} = b_i b_{i+12} = c_ic_{i+12} = 1  \ \text{ for all $i$ }, \\

a_{1}b_{2}a_{17}b_{22}, \ 
a_{1}b_{6}a_{9}b_{10}, \ 
a_{1}b_{10}a_{9}b_{6}, \ 
a_{1}b_{14}a_{21}b_{14}, \ 
a_{1}b_{18}a_{5}b_{18}, \\ 
a_{1}b_{22}a_{17}b_{2}, \ 
a_{5}b_{2}a_{21}b_{6}, \ 
a_{5}b_{6}a_{21}b_{2}, \ 
a_{5}b_{22}a_{9}b_{22}, \\

a_{1}c_{3}a_{17}c_{3}, \ 
a_{1}c_{7}a_{13}c_{19}, \ 
a_{1}c_{11}a_{9}c_{11}, \ 
a_{1}c_{15}a_{1}c_{23}, \ 
a_{5}c_{3}a_{5}c_{19}, \\
a_{5}c_{7}a_{21}c_{7}, \ 
a_{5}c_{11}a_{17}c_{23}, \ 
a_{9}c_{3}a_{21}c_{15}, \ 
a_{9}c_{7}a_{9}c_{23}, \\

b_{2}c_{3}b_{18}c_{23}, \ 
b_{2}c_{7}b_{10}c_{11}, \ 
b_{2}c_{11}b_{10}c_{7}, \ 
b_{2}c_{15}b_{22}c_{15}, \ 
b_{2}c_{19}b_{6}c_{19}, \\
b_{2}c_{23}b_{18}c_{3}, \ 
b_{6}c_{3}b_{22}c_{7}, \ 
b_{6}c_{7}b_{22}c_{3}, \ 
b_{6}c_{23}b_{10}c_{23}.
\end{array}
\right.\right\rangle .
\]
The group $\Gamma_1$ induces a Drinfeld-Manin solution $R_1$ which can be represented by a $324\times324$ matrix.

\end{example}

For any set of size $k$ of distinct odd primes, $p_1, \ldots, p_k$, there is a group acting simply transitively on a product of $k$ trees of valencies $p_1+1,\ldots, p_k+1$, obtained using 
Hamiltonian quaternion algebras; see \cite{RSV}. 

\begin{example}

For $p_1=3,p_2=5,p_3=7$ we construct a group $G_2$ acting simply transitively on a product of three trees
 \[
\RT_{4} \times  \RT_{6} \times  \RT_{8},
\]

We set
\begin{align*}
a_1 = 1 + j + k, \ a_2 = 1+j-k, \ a_3 = 1-j-k, \ a_4 = 1 -j + k, \\
b_1 = 1 + 2i, \ b_2  = 1 + 2j, \ b_3 = 1 + 2k, \ b_4 = 1 - 2i, \ b_5 = 1- 2j, \ b_6 = 1 - 2k, \\
c_1 = 1+2i + j + k, \ c_2 = 1-2i + j + k, \ c_3 = 1+2i - j + k, \ c_ 4= 1+2i + j - k, \\ 
 c_5 = 1- 2i -  j - k, \ c_6 = 1+2i - j - k, \ c_7 = 1-2i + j - k, \ c_8 = 1-2i - j + k.
\end{align*}

With this notation we have $a_i^{-1} = a_{i+2}$, $b_i^{-1} = b_{i+3}$, and $c_i^{-1} = c_{i+4}$, and using these abbreviations  the following explicit presentation
of a group acting simply transitively and a product of three trees
was found in \cite{RSV}.

\[
\Gamma_2 = \left\langle
\begin{array}{c}
a_1,a_2 \\ 
b_1,b_2,b_3 \\
c_1,c_2,c_3,c_4
\end{array}
\ \left| 
\begin{array}{c}
a_1b_1a_4b_2,  \ a_1b_2a_4b_4, \  a_1b_3a_2b_1, \ 
a_1b_4a_2b_3,  \ a_1b_5a_1b_6, \ a_2b_2a_2b_6 \\

a_1c_1a_2c_8, \ a_1c_2a_4c_4, \ a_1c_3a_2c_2, \ a_1c_4a_3c_3, \\
a_1c_5a_1c_6, \ a_1c_7a_4c_1, \ a_2c_1a_4c_6, \ a_2c_4a_2c_7 \\

b_1c_1b_5c_4, \
b_1c_2b_1c_5, \
b_1c_3b_6c_1, \
b_1c_4b_3c_6, \
b_1c_6b_2c_3, \
b_1c_7b_1c_8, \\
b_2c_1b_3c_2, \
b_2c_2b_5c_5, \
b_2c_4b_5c_3, \
b_2c_7b_6c_4, \
b_3c_1b_6c_6, \
b_3c_4b_6c_3
\end{array}
\right.\right\rangle.
\]

The group $\Gamma_2$ induces a Drinfeld-Manin solution $R_2$ which can be represented by a $324\times324$ matrix.
\end{example}

To show that $R_1$ and $R_2$ are not isomorphic, we compute first homologies of $\Gamma_1$ and $\Gamma_2$. We use MAGMA, but this can be done by direct calculations as well. Since $H_1(\Gamma_1)=\bZ/2\bZ \times \bZ/10\bZ \times \bZ/10\bZ$, $H_1(\Gamma_2)=\bZ/2\bZ \times \bZ/2\bZ \times \bZ/4\bZ \times \bZ/4\bZ$, then, by Proposition 1,
$R_1$ and $R_2$ are not isomorphic.


\end{document}